\newtheorem{lemma}{Lemma}[section]
\newtheorem{proposition}{Proposition}[section]
\newtheorem{condition}{Condition}[section]
\newtheorem{theorem}{Theorem}[section]
\newtheorem{remark}{Remark}[section]
\newcommand{\PP}{\mathbb{P}}
\newcommand{\EE}{\mathbb{E}}
\newcommand{\ZZ}{\mathbb{Z}}
\newcommand{\FF}{\mathcal{F}}
\newcommand{\Rr}{\mathcal{R}}
\newcommand{\Name}{{\rm GERW}}
\renewcommand\thetable{\thesection.\@arabic\c@table}
\title[generalized excited random walk]{A note on transience of generalized multi-dimensional excited random walks}
\author{Rodrigo B. Alves$^1$, Giulio Iacobelli$^2$, Glauco Valle$^3$}
\thanks{1. Supported by CAPES}
\thanks{3. Supported by CNPq grant 307938/2022-0 and FAPERJ grant E-26/202.636/2019.}
\address{
\newline
\newline
FGV - Escola de matem\'atica aplicada.
\newline Caixa postal 22250-900, Rio de Janeiro, Brasil
\newline
$^1$ e-mail: {\rm \texttt{rodrigo.alves@fgv.br}}
}
\address{
\newline
\newline
UFRJ - Departamento de m\'etodos estat\'{\i}sticos do Instituto de Matem\'atica.
\newline  Caixa Postal 68530, 21945-970, Rio de Janeiro, Brasil
\newline
$^2$ e-mail: {\rm \texttt{giulio@im.ufrj.br}}
\newline
$^3$ e-mail: {\rm \texttt{glauco.valle@im.ufrj.br}}
}
\subjclass[2020]{60K37}
\keywords{excited random walks, non-Markovian processes,  transience}
\begin{document}

\maketitle

\begin{abstract}
We consider a variant of the Generalized Excited Random Walk (\Name) in dimension $d\ge 2$ where the lower bound on the drift for excited jumps is time-dependent and decays to zero.  We show that if the lower bound decays slower than $n^{-\beta_0}$ ($n$ is time),  where $\beta_0$ depends on the transitions of the process, the \Name{} is transient in the direction of the drift.
\end{abstract}



\setcounter{tocdepth}{2}




\section{Introduction}

The multi-dimensional Generalized Excited Random Walk (\Name) was introduced by Menshikov et al. in~\cite{menshikov2012general} following a series of works on multi-dimensional excited random walk \cite{benjamini2003excited,kozma2003excited,berard2007central}. The model considered in ~\cite{menshikov2012general} is a uniformly elliptic random walk with bounded jumps in dimension $d\ge 2$ such that on already visited sites it behaves as a $d$-dimensional martingale with bounded jumps and zero-mean vector and whenever a site is visited for the first time its increment has a drift in some fixed direction $\ell$ of the unit sphere in $\mathbb{R}^d$. They show that the GERW with a drift condition in direction $\ell$ is ballistic in that direction. Besides that, they proved a LLG and a CLT (both for dimensions $d \geq 2$) under stronger hypothesis on the definition of GERW; these particular models were called \textit{excited random walk in random environment}. 

What makes the GERW  an interesting model is the self-interaction encoded in the different behaviors the process has on sites visited for the first time as compared to sites already visited. This makes it an important toy model of non-Markovian random walks. 
Similar works worth mentioning along these lines are~\cite{angel2021balanced,benjamini2011balanced,peres2016martingale}.  
A natural question is  what happens to \Name{} when the strength of the drift on the first visits decreases with time. Would the process still be ballistic in the direction of the drift? What about LLN and CLT?  We propose here a variation of \Name{} to contemplate this case by assuming that the drift in a fixed direction $\ell$ at time $n$ is of order $n^{-\beta}$ if at this time  a site is visited for the first time. If $\{X_n\}_{n\geq 0}$ denotes the \Name{} under this weaker condition on the directional drift,   we show that  $\lim_{n \rightarrow \infty} X_n \cdot \ell = \infty$ with positive probability ({\it directional transience})  if $\beta$ is sufficiently small. This shows that our model has an intermediary behavior between a mean-zero random walk (non-excited) and the \Name{}  considered in \cite{menshikov2012general} (see, Remark~\ref{rem:sub-balistic}).  In \cite{AIV} we discuss limit theorems for $\beta \ge 1/2$ for a particular class of \Name{} in the same spirit of the excited random walk in a random environment discussed in 
\cite{menshikov2012general}.

Our proof is based on an adaptation of the arguments presented in~\cite{menshikov2012general} for our time-inhomogeneous case. It involves the use of a bound on the range of the walk to guarantee that the walk receives enough impulse in the drift direction. 


\subsection{Definition of the model and main result}\label{sec:model}
Let $d \ge 2$ be the fixed dimension and  $X = \{ X_n \}_{n \geq 0}$ be a $\ZZ^d$ valued adapted process on a stochastic basis $(\Omega,\mathcal{F},\PP,\{ \mathcal{F}_n \}_{n \geq 0})$ where $\FF_0$ contains all the $\PP$-null sets of $\FF$. We denote by $\mathbb{E}$ the expectation with respect to $\PP$ and by $||\cdot||$  the euclidean norm in $\mathbb{R}^d$.
Now fix $\{\lambda_n\}_{n\ge 0}$ a sequence of positive real numbers, $\ell \in \mathbb{S}^{d-1}$, where $\mathbb{S}^{d-1}$ is the unit sphere of $\mathbb{R}^d$ and a nonempty set $A \subset \ZZ^d$. We assume that $X_0=0$ and we call $X$ a $\lambda_n$-\Name{} in direction $\ell$ with excitation set $A$, if it satisfies the following conditions:

\begin{condition}[Bounded increments]\label{condição1}
There exists a positive constant $K$ such that $\sup_{n \geq 0} || X_{n+1} - X_{n} || < K$ on every realization. 
\end{condition}

\begin{condition}\label{condição2} Almost surely
\begin{itemize}
    \item on $\{ X_k \neq X_n \, \forall \; k < n \} \cap \{ X_n \in A\}$, 
$$
    \mathbb{E} [ X_{n+1} - X_n | \mathcal{F}_n] \cdot \ell  \geq \lambda_n \, \footnote{
    Setting $B := \{ X_k \neq X_n \, \forall \; k < n \} \cap \{ X_n \in A\}$, the statement that almost surely $\mathbb{E} [ X_{n+1} - X_n | \mathcal{F}_n] \cdot \ell  \geq \lambda_n$ on the event $B$ means that 
 $\mathbbm{1}\{B\} \mathbb{E} [ X_{n+1} - X_n | \mathcal{F}_n] \cdot \ell  \geq \mathbbm{1}\{B\} \lambda_n$, almost surely. }.
$$
    \item on $\{ \exists\,  k < n  \text{ such that }  X_k = X_n \}$ or $\{ X_k \neq X_n \, \forall \; k < n \} \cap \{ X_n \notin A\}$,
    \[
     \mathbb{E} [ X_{n+1} - X_n | \mathcal{F}_n] = 0\, .
    \]
\end{itemize}
\end{condition}

\begin{condition}\label{condição3}
 There exist $h, r > 0$ such that

\begin{itemize}
    \item $X$ is {\rm uniformly elliptic in direction $\ell$}, i.e.,  for all $n$
\begin{equation}\label{eq:UE1}
\tag{UE1}\PP \left[ \left( X_{n+1} - X_n \right) \cdot \ell > r | \mathcal{F}_n \right] \geq h\,, \; {a.s..}
\end{equation}
\item $X$ is {\rm uniformly elliptic on the event $\{\mathbb{E} [ X_{n+1} - X_n | \mathcal{F}_n] = 0\}$:} on the event $ \{ \mathbb{E} [ X_{n+1} - X_n | \mathcal{F}_n] = 0 \}$,  for all $\ell' \in \mathbb{S}^{d-1}$,  with $|| \ell '|| = 1$
\begin{equation}\label{eq:UE2} 
\tag{UE2}\PP \left[ \left( X_{n+1} - X_n \right) \cdot \ell ' > r | \mathcal{F}_n \right] \geq h\,, \; {a.s..}
\end{equation}
\end{itemize}
\end{condition}

When $A = \ZZ^d$, we call $X$ simply a $\lambda_n$-\Name{}.




\medskip 
For every $\ell \in \mathbb{S}^{d-1}$, let $\mathbb{M}_{\ell}$ denote the positive half-space in direction $\ell$, that is, $\mathbb{M}_{\ell} = \{ x \in \ZZ^d : x \cdot \ell > 0 \}$.
Our main result is stated below:

\begin{theorem}\label{thm:main}
Let $X$ be a  $\lambda_n$-\Name{} in direction $\ell$ with excitation set $A \supset \mathbb{M}_{\ell}$.  There exists $\beta_0 < 1/6$ such that if for some $n_0 \in \mathbb{N}$, $\lambda >0$ and $\beta<\beta_0$, we have $\lambda_n \ge \lambda (n_0+n)^{-\beta}$ for every $n\ge 1$, then
$$
\PP \big( \lim_{n \rightarrow \infty} X_n \cdot \ell = \infty \big) > 0\,.
$$
\end{theorem}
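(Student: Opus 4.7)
\medskip

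My plan is to adapt the martingale-decomposition and range-bound strategy of~\cite{menshikov2012general} to the time-inhomogeneous setting. Set $Y_n := X_n \cdot \ell$. By Condition~\ref{condição2}, the Doob decomposition
\[
Y_n = M_n + D_n, \qquad D_n := \sum_{k=0}^{n-1} \EE\bigl[(X_{k+1}-X_k)\cdot\ell \,\big|\, \FF_k\bigr],
\]
writes $Y$ as a martingale $M$ with $K$-bounded increments plus a nondecreasing predictable compensator $D$. Letting $\mathcal{N}_n$ denote the set of times $k<n$ at which $X_k$ is a first visit with $X_k \in A$, Condition~\ref{condição2} together with $A \supset \mathbb{M}_\ell$ gives $D_n \ge \sum_{k \in \mathcal{N}_n} \lambda_k \ge \lambda\,(n_0+n)^{-\beta}\,|\mathcal{N}_n|$. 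The martingale $M_n$ is immediate to control via Azuma--Hoeffding using Condition~\ref{condição1}: outside an event of vanishing probability, $|M_n| \le C\sqrt{n\log n}$. So everything reduces to a lower bound on $|\mathcal{N}_n|$.

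The core of the argument is to prove, on a suitable event of positive probability, a polynomial lower bound $|\mathcal{N}_n| \ge c\, n^{\alpha}$ with $\alpha > 1/2$. Mimicking~\cite{menshikov2012general}, I would isolate a good event on which (i) by~\eqref{eq:UE1} the walk enters $\mathbb{M}_\ell$ in a bounded number of steps and never leaves a thick half-space after an initial epoch, and (ii) the mean-zero, uniformly elliptic behaviour on already-visited sites guaranteed by~\eqref{eq:UE2} plus the bounded jumps forces the walk to spread as a nondegenerate martingale, so that within any window of length $n$ it cannot confine itself to a set of cardinality much smaller than $n^\alpha$. The threshold $\beta_0 < 1/6$ announced in the statement is consistent with $\alpha \approx 2/3$: the drift contribution $\lambda\,n^{\alpha-\beta}$ then dominates the fluctuation $\sqrt{n\log n}$ precisely when $\alpha - \beta > 1/2$, i.e.\ $\beta < 1/6$.

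Granted the range lower bound, on the good event
\[
Y_n \;\ge\; c\,\lambda\, n^{\,2/3-\beta} - C\sqrt{n\log n} \;\longrightarrow\; +\infty,
\]
so $\liminf_n Y_n = +\infty$ on a set of positive probability. To upgrade this $\liminf$ into a $\lim$ and so obtain genuine directional transience, I would iterate the argument from the ladder times $T_m := \inf\{n : Y_n \ge m\}$: restarting the estimates at $T_m$ and applying them conditionally on $\FF_{T_m}$ shows that, uniformly in $m$, the walk avoids ever returning below $m/2$ with probability bounded away from zero, and sending $m \to \infty$ produces the event $\{\lim_n Y_n = +\infty\}$ with positive probability. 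The main obstacle, and the place where the exponent $\beta_0$ is pinned down, is the range-type estimate on $|\mathcal{N}_n|$: the analogous bound in~\cite{menshikov2012general} was tuned to a constant drift, and its adaptation must keep the vanishing factor $n^{-\beta}$ explicit throughout the bootstrap so as to yield the quantitative polynomial lower bound just above the diffusive exponent $1/2$.
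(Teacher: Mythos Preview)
Your proposal assembles the right ingredients---Doob decomposition, Azuma for the martingale part, and a range lower bound for the compensator---but the heart of the argument is missing. The assertion in your item~(i) that the walk ``never leaves a thick half-space after an initial epoch'' with positive probability is precisely the difficult statement; you attribute it to~\eqref{eq:UE1}, but uniform ellipticity in direction $\ell$ only lets the walk \emph{enter} $\mathbb{M}_\ell$, not stay there. Without this, you cannot identify $|\mathcal{N}_n|$ with $|\mathcal{R}^X_n|$, since excited steps occur only at first visits to sites in $A$, and $A\supset\mathbb{M}_\ell$ helps only if the walk remains in $\mathbb{M}_\ell$. The paper proves this separately as Proposition~\ref{prop43_pnn0-1} via a multi-scale bootstrap: after an initial push to level $m$ via~\eqref{eq:UE1}, it runs along a super-exponentially growing sequence of scales $m_k$, at each scale controlling the backtracking by Azuma and the forward progress by Proposition~\ref{prop42_pnn0} (which is the quantitative range-plus-drift estimate you describe), and then intersects all these events. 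Your sketch collapses this into a single sentence.

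Your ``upgrade'' paragraph is both unnecessary and, as stated, incorrect. If on your good event $Y_n\ge c\lambda n^{2/3-\beta}-C\sqrt{n\log n}$ for all large $n$, then already $\lim_n Y_n=+\infty$ there; there is no $\liminf$ to upgrade. More importantly, restarting at the ladder times $T_m$ replaces the drift bound $\lambda(n_0+n)^{-\beta}$ by $\lambda(n_0+T_m+n)^{-\beta}$, so the constant in Proposition~\ref{prop43_pnn0-1} deteriorates as $m\to\infty$ and is \emph{not} bounded away from zero uniformly in $m$; the paper flags exactly this obstruction in Remark~\ref{rem:justqc} as the reason the almost-sure result is left as a conjecture. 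The paper's passage from $\{Y_n>0\ \forall n\}$ to $\{\lim_n Y_n=+\infty\}$ instead uses a Borel--Cantelli argument: if the walk returned to a bounded strip infinitely often, uniform ellipticity would force it across level zero almost surely. That said, once Proposition~\ref{prop43_pnn0-1} is available, your route---combining the almost-sure range bound of Proposition~\ref{prop41} with an almost-sure Azuma control of $M_n$ to get $Y_n\to+\infty$ on $\{Y_n>0\ \forall n\}$---is a legitimate alternative to the paper's Borel--Cantelli step.
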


\begin{remark}\label{rem:sub-balistic}
1. If $\lambda_n$ is $O(n^{-\beta})$ with $0 < \beta < 1/2$, then the  $\lambda_n$-\Name{} is not ballistic (thus it has null speed) since the total mean drift accumulated by time $n$ is bounded by  $n^{1-\beta}$. We conjecture that $\lim_{n \rightarrow \infty} X_n \cdot \ell = \infty$ holds almost surely, and even that $\liminf_{n \rightarrow \infty} n^{\beta - 1} (X_n \cdot \ell) > 0$ almost surely. We point out that the almost sure result is unattainable directly from the results presented here (we discuss this assertion just after the proof of Theorem \ref{thm:main} in Remark \ref{rem:justqc}.) 2. The condition $\beta < 1/6$ in the statement of Theorem \ref{thm:main} follows from limitations in our proof. We also conjecture that the result holds for $\beta < 1/2$. For a discussion and some results on the case $\beta \ge 1/2$ see \cite{AIV}.
\end{remark}



\subsection{Proof of Theorem \ref{thm:main}}\label{resultados_pn}

The strategy to prove Theorem~\ref{thm:main} is based on obtaining an analogous result to~\cite[Proposition 4.3]{menshikov2012general} where it is proved that the GERW in direction $\ell$ (with $\lambda_n$ constant) never goes below the origin in direction $\ell$ with positive probability. We generalize this proof to our case.

First, we need some auxiliary results. Given a stochastic process $\{ X_n\}_{n \geq 0}$ on the lattice $\ZZ^d$, we denote its range at time $n$ by
\begin{equation*}
\Rr_n ^X := \{ x \in \ZZ^d : X_k = x \text{ for some } 0 \leq k \leq n \}\,,
 \end{equation*}
i.e., the set of sites visited by the process up to time $n$. Henceforth $|A|$ denotes the number of elements of a set $A$. 

\begin{proposition}\label{prop41}
Let $X$ be a $\lambda_n$-\Name. Then,  there exist positive constants $\alpha \in (0, 1/6)$, $\gamma_1$, $\gamma_2$ , which depend on $K$, $h$, and $r$, such that for every  $\lambda_n$
\begin{equation*} 
\PP [|\mathcal{R}^X _n | < n^{\frac{1}{2} +\alpha} ] < \exp\{- \gamma_1 n^{\gamma_2}\}\,,  
\end{equation*}
for all $n \geq 1$. 
\end{proposition}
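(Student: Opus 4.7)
The plan is a pigeonhole argument on the local time. Set $L_n(x):=|\{0\le k\le n : X_k=x\}|$. From the identity $n+1=\sum_{x\in\mathcal{R}_n^X}L_n(x)$ one has $|\mathcal{R}_n^X|\cdot \max_x L_n(x)\ge n+1$, so the proposition reduces to showing
\[
\PP\Big(\max_{x\in\ZZ^d} L_n(x) > n^{1/2-\alpha}\Big) \le \exp\{-\gamma_1 n^{\gamma_2}\}
\]
for some $\alpha\in(0,1/6)$ and $\gamma_1,\gamma_2>0$ depending only on $K$, $h$, $r$. Because the bound has to be uniform in $\{\lambda_n\}$ (in particular it has to hold when $\lambda_n\equiv 0$), only Conditions~\ref{condição1} and~\ref{condição3} are available as input, not the drift in Condition~\ref{condição2}.

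\textbf{Key step: exit from a mesoscopic box.} Fix a scale $L=n^{\beta}$ with $\beta\in(0,1/6)$ to be optimized. I would first prove that for every $\FF_m$-measurable $X_m\in B(x_0,L)$ and every $s\ge 1$,
\[
\PP\bigl(X_k\in B(x_0,L)\text{ for all }k\in[m,m+s] \,\big|\,\FF_m\bigr) \le \exp(-c\, s/L^2),
\]
with $c=c(K,h,r)>0$. The argument is iterative: on any window of length $CL^2$, condition~\eqref{eq:UE1} forces $\EE[((X_{k+1}-X_k)\cdot\ell)^2\mid\FF_k]\ge hr^2$, so the conditional second moment of the $\ell$-displacement of $X$ over the window is of order $CL^2$, and (together with~\eqref{eq:UE2} on the zero-drift steps) a Paley--Zygmund second-moment inequality yields a constant lower bound $q>0$ on the conditional probability of leaving the strip $\{y:|y-y_0|\le L\}$ during the window, uniformly in the past. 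Iterating across the $s/(CL^2)$ windows is legitimate despite the lack of Markov property precisely because Condition~\ref{condição3} is a pointwise almost-sure statement conditional on $\FF_k$.

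\textbf{From exit times to the local-time bound.} The time spent by the walk at $x$ is bounded by the total time spent in $B(x,L)$, which decomposes into excursions into this ball. By the exit estimate each excursion lasts at most $L^2\log n$ steps with polynomial failure probability, and the same exit estimate applied at the coarse scale $L$ bounds the number of such excursions by $(\log n)^{O(1)}$ with stretched-exponential tail. Combining, $L_n(x)\le n^{2\beta}(\log n)^{O(1)}$ off an event of probability $\exp(-c n^{\gamma'})$. A union bound over the $O(n^d)$ sites in $[-Kn,Kn]^d$ (allowed by Condition~\ref{condição1}) upgrades this to $\max_x L_n(x)$. Choosing $\beta$ slightly above $1/4-\alpha/2$ absorbs the polylogarithmic losses and gives $\max_x L_n(x)\le n^{1/2-\alpha}$ on the good event; the constraint $\alpha<1/6$ then corresponds exactly to $\beta<1/6$.

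\textbf{Main obstacle.} The hardest part is the exit-time estimate in a non-Markovian setting: standard escape and local-time estimates for uniformly elliptic walks rest on Markov or stationarity properties that are unavailable here. The only substitute is the adapted, conditional-on-$\FF_k$, almost-sure form of Conditions~\ref{condição3}, which forces the iteration to be phrased in adapted language (Doob-type and Azuma-type bounds, rather than renewal). A secondary subtlety is that the coarse-grained walk on boxes of side $L$ does not obviously satisfy a ``one-step'' uniform-ellipticity condition of its own, so the estimate on the number of returns to a given box at the macroscopic scale requires a second layer of the same kind of adapted argument; this loss is what ultimately caps the exponent at $1/6$ rather than the conjectured $1/2$.
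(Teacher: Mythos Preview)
Your approach differs substantially from the paper's and contains a real gap.

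For comparison: the paper does not attempt to bound the site-wise local time. It works with strip local times $L_n(m)=|\{k\le n:X_k\cdot\ell\in[m,m+1)\}|$, first showing via the submartingale property of $X\cdot\ell$ that $\PP[L_n(m)\ge n^{1/2+2\delta}]\le e^{-\gamma' n^\delta}$ (Lemma~\ref{lema51}), and then running a trap/non-trap dichotomy on strips of width $4n^{e_w}$. If the range were small there would be few trap strips, so most of the time one has $|\mathcal{R}_j^X\cap B(X_j,n^{e_w})|\le n^{e_t}$; on that event the process, which on revisited sites is a $d$-martingale with UE2, has few obstacles in its $n^{e_w}$-neighbourhood, and Lemma~5.4 of \cite{menshikov2012general} forces a new site to be hit within each window of length $n^{2e_w-\varepsilon}$. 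Counting the resulting new sites gives the range lower bound, and the constraint $\alpha<1/6$ arises from $e_w<1/6$.

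The gap in your argument is the step ``the same exit estimate applied at the coarse scale $L$ bounds the number of such excursions by $(\log n)^{O(1)}$''. An exit-time estimate controls how long each excursion lasts; it says nothing about how often the walk returns. What you need is a bound of the form $\PP(\text{hit }B(x,L)\text{ before }\partial B(x,2L)\mid\FF_\tau)\le 1-c$ almost surely, i.e.\ a harmonic-measure/gambler's-ruin estimate in the non-Markov setting, and you do not supply one. If the number of returns scaled as it does for the one-dimensional projection $X\cdot\ell$ (order $n^{1/2-\beta}$), your bound on the time spent in $B(x,L)$ would be $n^{1/2+\beta}$ rather than $n^{2\beta}$ and the pigeonhole would give nothing; so this is not a secondary subtlety but the crux, and it is exactly what the paper's trap argument is designed to circumvent. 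Relatedly, your claim that only Conditions~\ref{condição1} and~\ref{condição3} are available is too strong: uniformity in $\{\lambda_n\}$ rules out using the \emph{magnitude} of the drift, but the paper does use Condition~\ref{condição2} qualitatively, both for the submartingale property of $X\cdot\ell$ and for the $d$-martingale structure on revisits fed into Lemmas~5.2 and~5.4 of \cite{menshikov2012general}. Any repair of your return estimate would almost certainly need that same input.
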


The proof of Proposition~\ref{prop41} is an adaptation of the proof of~\cite[Proposition 4.1]{menshikov2012general} and, for completeness, is provided in Appendix \ref{ap:A}. 
For quite general processes, a similar result is presented in ~\cite[Theorem 1.4]{menshikov2014range}. However, 
Proposition~\ref{prop41} does not follow directly  from~\cite[Theorem 1.4]{menshikov2014range}. In fact,  although the $\lambda_n$-\Name{} is a submartingale in direction $\ell$, it does not fit   in~\cite[Definition 1.1.d.]{menshikov2014range} required in~\cite[Theorem 1.4]{menshikov2014range}. 

\medskip 
Set $H(a,b) \subset \ZZ^d$ for $a<b$ as:
\begin{equation*}
H(a,b) := \{ x \in \ZZ^d : x \cdot \ell \in [a,b] \}\, , 
\end{equation*}
which represents the strip in direction $\ell$ between levels $a$ and $b$, see Figure~\ref{fig:H(a,b)}.

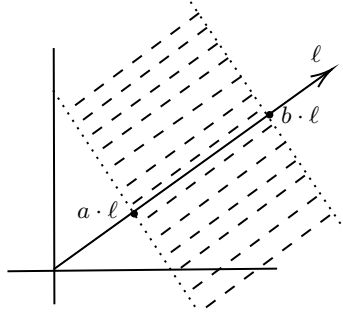
\begin{figure}[h]
    \centering

\tikzset{every picture/.style={line width=0.75pt}} 

\begin{tikzpicture}[x=0.75pt,y=0.75pt,yscale=-1,xscale=1]

\draw    (200.33,89.67) -- (200.67,219) ;
\draw    (177,202.33) -- (313,201) ;
\draw [fill={rgb, 255:red, 0; green, 0; blue, 0 }  ,fill opacity=1 ]   (200.33,201.33) -- (339.04,101.5) ;
\draw [shift={(340.67,100.33)}, rotate = 144.26] [color={rgb, 255:red, 0; green, 0; blue, 0 }  ][line width=0.75]    (10.93,-3.29) .. controls (6.95,-1.4) and (3.31,-0.3) .. (0,0) .. controls (3.31,0.3) and (6.95,1.4) .. (10.93,3.29)   ;
\draw  [dash pattern={on 0.84pt off 2.51pt}]  (200,111) -- (271.67,220.33) ;
\draw  [dash pattern={on 0.84pt off 2.51pt}]  (269.33,66.33) -- (346.33,180.33) ;
\draw  [fill={rgb, 255:red, 0; green, 0; blue, 0 }  ,fill opacity=1 ] (239.52,173.29) .. controls (239.52,172.52) and (240.14,171.9) .. (240.9,171.9) .. controls (241.67,171.9) and (242.29,172.52) .. (242.29,173.29) .. controls (242.29,174.05) and (241.67,174.67) .. (240.9,174.67) .. controls (240.14,174.67) and (239.52,174.05) .. (239.52,173.29) -- cycle ;
\draw  [fill={rgb, 255:red, 0; green, 0; blue, 0 }  ,fill opacity=1 ] (307.83,123.33) .. controls (307.83,122.57) and (308.45,121.95) .. (309.21,121.95) .. controls (309.98,121.95) and (310.6,122.57) .. (310.6,123.33) .. controls (310.6,124.1) and (309.98,124.71) .. (309.21,124.71) .. controls (308.45,124.71) and (307.83,124.1) .. (307.83,123.33) -- cycle ;
\draw  [dash pattern={on 4.5pt off 4.5pt}]  (273.67,72) -- (208.33,120.67) ;
\draw  [dash pattern={on 4.5pt off 4.5pt}]  (277.67,79.33) -- (212.33,128) ;
\draw  [dash pattern={on 4.5pt off 4.5pt}]  (283,87.33) -- (217.67,134.67) ;
\draw  [dash pattern={on 4.5pt off 4.5pt}]  (288.33,94.67) -- (223.67,141.33) ;
\draw  [dash pattern={on 4.5pt off 4.5pt}]  (295,104) -- (231,148.67) ;
\draw  [dash pattern={on 4.5pt off 4.5pt}]  (300.33,113.33) -- (237,159.33) ;
\draw  [dash pattern={on 4.5pt off 4.5pt}]  (317,137.33) -- (251.67,186) ;
\draw  [dash pattern={on 4.5pt off 4.5pt}]  (322.33,146.67) -- (255,196) ;
\draw  [dash pattern={on 4.5pt off 4.5pt}]  (327,154) -- (261.67,202.67) ;
\draw  [dash pattern={on 4.5pt off 4.5pt}]  (333,164.67) -- (267.67,213.33) ;
\draw  [dash pattern={on 4.5pt off 4.5pt}]  (339.67,174) -- (274.33,222.67) ;
\draw  [dash pattern={on 4.5pt off 4.5pt}]  (303,122) -- (239.67,168.67) ;
\draw  [dash pattern={on 4.5pt off 4.5pt}]  (310.33,128.67) -- (245,177.33) ;

\draw (328.67,87.07) node [anchor=north west][inner sep=0.75pt]  [font=\scriptsize]  {$\ell $};
\draw (210.67,165.4) node [anchor=north west][inner sep=0.75pt]  [font=\scriptsize]  {$a\cdot \ell $};
\draw (313.6,117.73) node [anchor=north west][inner sep=0.75pt]  [font=\scriptsize]  {$b\cdot \ell $};

\end{tikzpicture}

\caption{$H(a,b)$ is the strip of $\mathbb{R}^d$ corresponding to the points $x$ such that $a\le x\cdot \ell \le b$.}
\label{fig:H(a,b)}
\end{figure}

The next proposition states that if the number of sites outside the excitation set in a strip with width of order $n^{\frac{1}{2}+\alpha}$ ($\alpha$ from Proposition~\ref{prop41}) containing the origin, is also of order $n^{\frac{1}{2}+\alpha}$, then $X_n \cdot \ell\;$ is at least of order $n^{\frac{1}{2}+\alpha-\beta}$ with high probability.


\begin{proposition}\label{prop42_pnn0}
Let $X$ be a $\lambda_n$-\Name{} in direction $\ell$ with excitation set $A \subset \mathbb{Z}^d$. If for some $n_0 \in \mathbb{N}$, $\lambda >0$ and $\beta<\alpha$ (from Proposition~\ref{prop41}), we have $\lambda_n \ge \lambda (n_0+n)^{-\beta}$ for every $n\ge 1$, and if for some $n \geq n_0$ it holds that 
\begin{equation}\label{cond44_pnn0}
\left\vert( \mathbb{Z}^d \setminus A) \cap H\left( - n^{\frac{1}{2} + \alpha}, \frac{2 \lambda}{3} n^{\frac{1}{2} + \alpha}\right)\right\vert \leq \frac{1}{3} n^{\frac{1}{2} + \alpha}\,,
\end{equation}
then 
\begin{equation}\label{eq42c_pnn0}
\PP\left[ X_n \cdot \ell < \frac{1}{3} \lambda n^{\frac{1}{2}+ \alpha-\beta} \right] < 5n\exp\{-\vartheta_1 n^{\vartheta_2}\}\, , 
\end{equation}
where 
\begin{align*}
\vartheta_1 & = \min\left\{ \gamma_1, \frac{1}{2K^2}, \frac{\lambda^{2}}{18K^2}, \frac{((1/3 -2^{1-\beta}/3) \lambda)^2}{2K^2} \right\} \,,
\\
\vartheta_2 & = \min\left\{ \gamma_2, 2(\alpha-\beta) \right\} \,,
\end{align*}
and $\gamma_1$, $\gamma_2$ are the same as in Proposition~\ref{prop41}.
\end{proposition}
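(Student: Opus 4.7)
The plan is to follow the argument of \cite[Proposition 4.2]{menshikov2012general}, adapted to our time-dependent drift. Writing $S_n := X_n \cdot \ell$, I would use the Doob decomposition $S_n = M_n + D_n$, where $D_n$ is the (predictable, non-decreasing) compensator produced by Condition \ref{condição2} and $M_n$ is a martingale whose difference range is at most $2K$ by Condition \ref{condição1}. Azuma--Hoeffding then gives $\PP[M_n - M_j < -t \mid \mathcal{F}_j] \le \exp(-t^2/(2(n-j)K^2))$ for every $0 \le j \le n$. The strategy is to isolate a good event on which $D_n$ is automatically large, and then absorb the complementary cases into martingale tail bounds.

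The good event I introduce is
\[
\mathcal{E}_n := \bigl\{|\mathcal{R}_n^X| \ge n^{\frac{1}{2}+\alpha}\bigr\} \cap \Bigl\{ S_k \in \bigl[-n^{\frac{1}{2}+\alpha},\, \tfrac{2\lambda}{3} n^{\frac{1}{2}+\alpha}\bigr]\ \text{for all } k \le n \Bigr\}.
\]
On $\mathcal{E}_n$, every site in $\mathcal{R}_n^X$ lies in the strip $H\bigl(-n^{\frac{1}{2}+\alpha},\, \tfrac{2\lambda}{3} n^{\frac{1}{2}+\alpha}\bigr)$, so by \eqref{cond44_pnn0} at least $\tfrac{2}{3}\, n^{\frac{1}{2}+\alpha}$ of these sites belong to $A$. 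Each first visit to such a site at time $k \le n$ contributes $\lambda_k \ge \lambda(2n)^{-\beta}$ to $D_n$ (using $n \ge n_0$), giving $D_n \ge \tfrac{2\lambda}{3\cdot 2^{\beta}}\, n^{\frac{1}{2}+\alpha-\beta}$ on $\mathcal{E}_n$. The target event $\{S_n < \tfrac{1}{3}\lambda n^{\frac{1}{2}+\alpha-\beta}\}$ then forces $M_n < \lambda\bigl(\tfrac{1}{3}-\tfrac{2^{1-\beta}}{3}\bigr)\, n^{\frac{1}{2}+\alpha-\beta}$ (a negative quantity since $\beta < 1$), which Azuma--Hoeffding bounds by $\exp\bigl\{-\tfrac{((1/3-2^{1-\beta}/3)\lambda)^2}{2K^2}\, n^{2(\alpha-\beta)}\bigr\}$; this produces the $2(\alpha-\beta)$ contribution to $\vartheta_2$.

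To estimate $\mathcal{E}_n^c$ intersected with the target event, I would split $\mathcal{E}_n^c$ into three pieces. The first, $\{|\mathcal{R}_n^X| < n^{\frac{1}{2}+\alpha}\}$, is bounded directly by Proposition \ref{prop41}. The second, $\{\min_{k \le n} S_k < -n^{\frac{1}{2}+\alpha}\}$, reduces via $S_k \ge M_k$ (since $D_k \ge 0$) to a maximal-deviation event for $-M_k$, handled by Doob's inequality applied to an exponential supermartingale, with rate $\tfrac{1}{2K^2}\, n^{2\alpha}$. The third, $\{\max_{k \le n} S_k > \tfrac{2\lambda}{3} n^{\frac{1}{2}+\alpha}\} \cap \{S_n < \tfrac{1}{3}\lambda n^{\frac{1}{2}+\alpha-\beta}\}$, is controlled by introducing the upcrossing time $\tau := \inf\{k : S_k > \tfrac{2\lambda}{3} n^{\frac{1}{2}+\alpha}\}$: the intersection forces $M_n - M_\tau < -\tfrac{\lambda}{3}\, n^{\frac{1}{2}+\alpha}$ (since $n^{-\beta} < 1$), and applying Azuma conditionally on $\{\tau = j\}$ and union-bounding over $j \in \{0, \dots, n-1\}$ gives $n\exp\bigl\{-\tfrac{\lambda^2}{18K^2}\, n^{2\alpha}\bigr\}$. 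This last step is what produces the prefactor $n$ in \eqref{eq42c_pnn0}.

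The main subtlety I anticipate is the handling of the upper-exit piece: the stopping-time decomposition has to be arranged so that the martingale increment after $\tau$ can be controlled uniformly in $\tau$, and the factor of $n$ from the union bound must be absorbed without degrading the exponential rate. Once this is done, the remainder is arithmetic: summing the four contributions, taking the minimum of the four rates to obtain $\vartheta_2 = \min\{\gamma_2,\, 2(\alpha-\beta)\}$ (note $2\alpha > 2(\alpha-\beta)$) and the minimum of the four constants to obtain $\vartheta_1$, the total fits within $5n\exp\{-\vartheta_1 n^{\vartheta_2}\}$.
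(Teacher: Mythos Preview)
Your proposal is correct and follows essentially the same route as the paper's proof in Appendix~\ref{ap:B}: the same good event $\mathcal{E}_n=G$, the same three-way split of $G^c$ handled respectively by Proposition~\ref{prop41}, an Azuma/maximal bound for the lower exit, and a union bound over the upcrossing time for the upper exit, followed by the drift lower bound on $G$ and a final Azuma application to $X_n-\sum D_k$. The only cosmetic differences are your use of Doob's maximal inequality (versus the paper's union bound) for the lower-exit term and the explicit stopping time $\tau$ (versus the paper's direct union over $k$) for the upper-exit term.
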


Let us point out that the constant $\lambda$ appearing in the statement of Proposition~\ref{prop42_pnn0}, which controls the strength of the drift,  is the same constant appearing  in~\eqref{cond44_pnn0} and \eqref{eq42c_pnn0}. The fact that the width of the strip $H$ depends on $\lambda$ is a technical choice used in the proof of Proposition~\ref{prop42_pnn0}.
%
The proof is provided in Appendix \ref{ap:B} and is an adaptation of the proof of~\cite[Proposition 4.2]{menshikov2012general}


\medskip 

The next proposition is our main contribution. It states that the $\lambda_n$-GERW in direction $\ell$ never goes below the origin in that direction with positive probability bounded below by some $\psi$ whose dependence on $\beta$ and $n_0$ is explicitly described. 



\smallskip 
\begin{proposition}\label{prop43_pnn0-1} 
Let $X$ be a  $\lambda_n$-\Name{} in direction $\ell$ with excitation set $A \supset \mathbb{M}_{\ell}$. If for some $n_0 \in \mathbb{N}$, $\lambda >0$ and $\beta < \alpha$ (same $\alpha$ from Proposition~\ref{prop41}) we have $\lambda_n \ge \lambda (n_0+n)^{-\beta}$ for every $n\ge 1$, then there exists a  constant $\psi>0$ such that
\begin{equation*}
\PP\left[ X_n \cdot \ell > 0 \text{  for all  } n\geq 1\right] \geq \psi\, .
\end{equation*}
\end{proposition}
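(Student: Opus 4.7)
The plan is to combine an initial push, driven by the directional uniform ellipticity \eqref{eq:UE1}, with an iterative use of Proposition~\ref{prop42_pnn0} on successive shifted copies of the walk. The role of the initial push is to place the walker high enough in direction $\ell$ so that, after we translate the origin, the sites outside the excitation region fall outside the strip appearing in the hypothesis \eqref{cond44_pnn0}. Concretely, I would fix $N\ge n_0$ (to be chosen large at the end) and apply \eqref{eq:UE1} inductively to obtain the event
\[
F_N := \bigl\{(X_{n+1}-X_n)\cdot \ell > r \text{ for all } 0\le n\le N-1\bigr\},
\]
which has probability at least $h^N$ and on which $X_n\cdot \ell > rn > 0$ for every $1\le n\le N$; in particular $X_N\cdot \ell > rN$.

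Conditionally on $F_N$, I would build a sequence of checkpoints $N = T_0 < T_1 < T_2 < \cdots$ with gaps $m_k := T_{k+1}-T_k$, and at each step apply Proposition~\ref{prop42_pnn0} to the shifted walk $Y^{(k)}_m := X_{T_k+m}-X_{T_k}$. By the strong Markov property, $Y^{(k)}$ is itself a $\lambda_{T_k+m}$-\Name{} in direction $\ell$ starting from the origin, with excitation set
\[
A_k := (A - X_{T_k})\setminus(\Rr_{T_k}^X - X_{T_k}),
\]
and its drift lower bound matches the hypothesis of Proposition~\ref{prop42_pnn0} with $\lambda$ unchanged and $n_0$ replaced by $n_0+T_k$. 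To verify the strip condition \eqref{cond44_pnn0} for $A_k$ at time $m_k$, I would use the inclusion
\[
\mathbb{Z}^d\setminus A_k \subset \bigl((\mathbb{Z}^d\setminus A)-X_{T_k}\bigr)\cup\bigl(\Rr_{T_k}^X - X_{T_k}\bigr);
\]
because $A\supset \mathbb{M}_\ell$, the first set lies in $\{z:z\cdot\ell \le -X_{T_k}\cdot\ell\}$ and contributes $0$ to the count whenever $X_{T_k}\cdot\ell > m_k^{1/2+\alpha}$, while the second contributes at most $T_k+1$, which is $\le \tfrac13 m_k^{1/2+\alpha}$ for a suitable choice of $m_k$.

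Between consecutive checkpoints, Condition~\ref{condição1} gives $X_n\cdot \ell \ge X_{T_k}\cdot \ell - K m_k$, so the walker stays positive as soon as $X_{T_k}\cdot \ell > K m_k$. Combining the three checkpoint requirements $T_k + 1 \le \tfrac13 m_k^{1/2+\alpha}$, $X_{T_k}\cdot \ell > m_k^{1/2+\alpha}$, and $X_{T_k}\cdot \ell > K m_k$ with the inductive update $X_{T_{k+1}}\cdot \ell \ge X_{T_k}\cdot \ell + (\lambda/3) m_k^{1/2+\alpha-\beta}$ delivered by Proposition~\ref{prop42_pnn0}, I would pick $m_k$ growing polynomially in $k$ so that the failure probabilities $5m_k\exp\{-\vartheta_1 m_k^{\vartheta_2}\}$ are summable, and take $N$ large enough that their sum is $\le 1/2$. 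Collecting everything yields
\[
\PP\bigl[X_n\cdot\ell>0\ \text{for all}\ n\ge 1\bigr]\ge \tfrac12 h^N =: \psi > 0.
\]

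The hard part will be the joint feasibility of these three checkpoint constraints: the range constraint forces $m_k\gtrsim T_k^{2/(1+2\alpha)}$, while the height and bounded-increment constraints require $X_{T_k}\cdot\ell$ to grow at comparable rates. Since each per-step climb delivered by Proposition~\ref{prop42_pnn0} is only of order $m_k^{1/2+\alpha-\beta}$, the induction closes only if $\beta$ is small enough for this cumulated climb to keep up with the growing contribution from the range; this is exactly where the hypothesis $\beta<\alpha$ (and hence $\beta<1/6$) enters and pins down the explicit quantitative dependence of $\psi$ on $\beta$ and $n_0$ mentioned in the statement.
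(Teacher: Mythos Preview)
Your outline has a genuine feasibility gap: the three checkpoint constraints you list cannot be satisfied simultaneously. The culprit is the use of Condition~\ref{condição1} alone to control the minimum between checkpoints. That gives the crude bound $X_n\cdot\ell\ge X_{T_k}\cdot\ell-Km_k$, forcing $X_{T_k}\cdot\ell>Km_k$, i.e.\ height linear in the gap. But Proposition~\ref{prop42_pnn0} only delivers an increment of order $m_k^{1/2+\alpha-\beta}$, which is \emph{sublinear} in $m_k$ since $\alpha<1/6$. Hence the cumulative height can never catch up with $Km_k$, no matter how you choose the $m_k$ (polynomial, exponential, or otherwise). The paper replaces the bounded-increment bound by Azuma's inequality for the submartingale $\{X_n\cdot\ell\}$: over an interval of length $L$ the drop exceeds $a$ with probability at most $L\exp(-a^2/(2K^2L))$, so one only needs $a^2\gg L$, i.e.\ height $\gg\sqrt{\text{gap}}$, which is compatible with the $m_k^{1/2+\alpha-\beta}$ growth.

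There is a second, compounding problem with re-shifting at every checkpoint: then the complement of the excitation set has size $T_k+1$, and \eqref{cond44_pnn0} forces $T_k\lesssim m_k^{1/2+\alpha}$. Since $\alpha<1/2$ this forces super-exponential growth of $m_k$, which in turn makes the height constraint (even in its Azuma form) fail: one checks that the required inequalities $\rho(1/2+\alpha)\ge 1$ and $1+2(\alpha-\beta)>\rho$ on the growth exponent $\rho$ are incompatible for $\alpha<1/6$. The paper avoids this by shifting only once, so that the complement of the excitation set has \emph{constant} size $\lceil r^{-1}\rceil m$; then Proposition~\ref{prop42_pnn0} applied to $W-y_0$ at the single sequence of times $\lfloor m_k^{2-\theta}\rfloor$ (with $\theta=\alpha-\beta$ and $m_{k+1}=\tfrac{\lambda}{3}m_k^{(2-\theta)(1/2+\theta)}$) gives directly $W_{\lfloor m_k^{2-\theta}\rfloor}\cdot\ell\ge m_{k+1}$, and Azuma controls the drop by $m_k$ on the interval $(\lfloor m_{k-1}^{2-\theta}\rfloor,\lfloor m_k^{2-\theta}\rfloor]$. (A minor side remark: the process has no strong Markov property, so the shift should be justified via Conditions~\ref{condição1}--\ref{condição3} directly, as the paper does.)
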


\begin{remark}
It is worth mentioning that we know explicitly how the constant $\psi$ appearing in Proposition~\ref{prop43_pnn0-1} depends on the parameters of the model. Specifically,   
\[
\psi = h^{\lceil r^{-1} \rceil C \left(\frac{3}{\lambda} \right)^{\frac{1}{\delta -1}}} c\,,
\]  with $c \in (0, 1)$, $\delta = (2-\alpha+\beta)(1/2 + \alpha-\beta)$, 
$C \ge K^{\frac{1}{\delta-1}} \left( \eta + \lceil r^{-1} \rceil^{\frac{1}{\delta-1}}\right) + n_0$, where 
\begin{align*}
\eta & = \left( \frac{ 2-\alpha+\beta}{\vartheta_1 \varphi_1}\right)^{\frac{1}{\varphi_1}}  \, , \quad \varphi_1  = \min \left\{ \alpha-\beta, (2-\alpha+\beta)\vartheta_2 \right\}\, , \nonumber
\end{align*}
and $\vartheta_1$, $\vartheta_2$ are as in Proposition~\ref{prop42_pnn0}. 
\end{remark}
Now we use Proposition~\ref{prop43_pnn0-1} to prove Theorem~\ref{thm:main}, and we prove the proposition just after. 

\smallskip

\begin{proof}[Proof of Theorem~\ref{thm:main}]
Note that 
\begin{equation*}
\PP\big[ \lim_{n \to \infty} X_n \cdot \ell = \infty \big] \geq  \PP[X_n \cdot \ell > 0 \text{ } \forall n \ge 1] \PP\big[ \lim_{n \to \infty} X_n \cdot \ell = \infty \mid X_n \cdot \ell > 0 \text{ } \forall n \ge 1 \big] \,,
\end{equation*}
and by Proposition~\ref{prop43_pnn0-1} we know that $ \PP[X_n \cdot \ell > 0 \text{ } \forall n \ge 1]\geq\psi>0$. Therefore, it suffices to prove that   $\PP\big[ \lim_{n \to \infty} X_n \cdot \ell = \infty \mid X_n \cdot \ell > 0 \text{ } \forall n \ge 1 \big]=1$. 

Fix $m \in \mathbb{N}$ and set $\mathcal{F}_n$-stopping times $\nu_{m,0} \equiv 0$,
$$
\tau_{m,j} = \inf \{ n > \nu_{m,j-1} : X_n \notin H(0,m) \}\,, 
$$
and
$$
\nu_{m,j} = \inf \{ n > \tau_{m,j} : X_n \in H(0,m) \}\,, 
$$
for every $j\ge 1$. Then,  $\tau_{m,j}-\nu_{m,j-1}$ represents the duration of the $j$-th excursion outside the strip $H(0,m)$. Note that, as usual, if $\tau_{m,j} = \infty$  (resp. $\nu_{m,j-1} = \infty$) then $\nu_{m,j} = \infty$ (resp. $\tau_{m,j} = \infty$). Also define $\mathcal{G}_j = \mathcal{F}_{\nu_{m,j}}$ and 
$$
\Gamma_j =  \{\nu_{m,j}<\infty\} \cap \big\{ X_n \cdot \ell < 0 \textrm{ for some } n \in (\nu_{m,j}, \nu_{m,j} + \hat m] \big\}  \,,
$$
where $\hat m = \lfloor m/r \rfloor + 1$. That is, $\Gamma_j$ denotes the event that the $j$-th excursion is finite, i.e.,  the random walk returns to the strip in a finite time after leaving it for the $j$-th time,  and that in the subsequent $\hat m$ steps after returning it visits the region $\{x\cdot \ell<0\}$. Recall that our random walk does not have independent increments nor it has the strong Markov property, therefore $\PP (\Gamma_{ j}\mid\mathcal{G}_{j})$ depends on the whole history until time $\nu_{m,j}$. The sets $\Gamma_{\hat m j}$ are $\mathcal{G}_{\hat m (j+1)}$-measurable and Condition~\ref{condição3} implies that
$$
\PP (\Gamma_{\hat m j}\mid\mathcal{G}_{\hat m j}) \geq \mathbbm{1}\{\nu_{m,\hat{m}j}<\infty\}h^{\hat m}\,.
$$
One can see that 
\begin{equation*}
\begin{split}
& \PP\Big[ \sum_{j=1}^{\infty} \PP (\Gamma_{\hat m j}\mid\mathcal{G}_{\hat m j}) = \infty \Bigm| \nu_{m,i} < \infty \, \forall \, i\ge 1  \Big] 
\ge \PP\Big[ \sum_{j=1}^{\infty} h^{\hat m} = \infty \Bigm| \nu_{m,i} < \infty \, \forall \, i\ge 1  \Big] = 1 \,.
\end{split}  
\end{equation*}
Hence, by a generalization of the second Borel-Cantelli lemma \cite[Theorem 5.3.2]{durrett}, since $\{\mathcal{G}_{\hat{m}j}\}_{j \ge 0}$ is a filtration and $\Gamma_{\hat{m}j} \in \mathcal{G}_{\hat{m}(j+1)}$ for all $j \ge 1$, we have that, a.s., 
\begin{equation}\label{eq:Borel}
    \{ \Gamma_{\hat{m}j} \ i.o. \} = \Big\{ \sum_{j=1}^{\infty} \PP (\Gamma_{\hat m j}\mid\mathcal{G}_{\hat m j}) = \infty \Big\} \,.
\end{equation} 

Ergo, by~\eqref{eq:Borel}, we obtain that 
$\PP \big( \Gamma_{\hat m j} \ i.o. \bigm| \nu_{m,j} < \infty \, \forall \, j\ge 1 \big) = 1$, and therefore
$$
\PP \big( \Gamma_{j} \ i.o. \bigm| \nu_{m,j} < \infty \, \forall \, j\ge 1 \big) = 1\,.
$$ 
The latter equation reveals that if the random walk keeps on coming back to the strip, it must visit the region $\{x\cdot \ell<0\}$ infinitely often almost surely. Thus, on $\{X_n \cdot \ell > 0 \ \forall n\geq 1 \}$ we have that the event $\{\textrm{there exists } j=j(m)\ge 1 \textrm{ such that } \nu_{m,j} = \infty \}$ must occur almost surely. 

Therefore, on the event $\{X_n \cdot \ell > 0 \ \forall n\geq 1 \}$ we indeed have that 
$$
\bigcap_{m \ge 1} \{\textrm{there exists } j=j(m)\ge 1 \textrm{ such that } \nu_{m,j} = \infty \}\,,
$$ 
occurs almost surely, which implies that $\{ \lim_{n \rightarrow \infty} X_n \cdot \ell = \infty \}$ almost surely. 
%
\end{proof}

\smallskip

\begin{remark}\label{rem:justqc}
Set $\zeta_K = \inf\{ m : X_m \cdot \ell \ge K \}$, $K>0$. Following the proof of  Theorem~\ref{thm:main} we obtain that $\Xi = \lim_{n \rightarrow \infty} X_n \cdot \ell = \infty$ in $\{ X_n \cdot \ell \ge K \ \forall n \ge \zeta_K 
\textrm{ for some } K\}$. Thus we would have the almost sure result in Theorem~\ref{thm:main} if $\Xi$ has probability one. We conjecture that this is true, but here the drift is reduced over time and we need a fine control on times $\zeta_K$ to be related to the lower bound in Proposition~\ref{prop43_pnn0-1}.
\end{remark}

\smallskip

\begin{proof}[Proof of Proposition~\ref{prop43_pnn0-1}]
Since on $\{ X_n \cdot \ell > 0 \text{ for all } n \geq 1 \}$ the process does not visit $\mathbb{Z}^d \backslash \mathbb{M}_{\ell}$,  it is sufficient to consider the case $A= \mathbb{Z}^d$.

Without loss of generality we consider $r\le 1$ in Condition~\ref{condição3} and $\lambda<1$. Define
\begin{equation*}
U_0 = \left\{ \left( X_{k+1} - X_k \right)\cdot \ell \geq r\;,  \text{ for all} \ k = 0, 1, ..., \lceil r^{-1} \rceil m - 1 \right\}.  
\end{equation*}
Note that $X_{\lceil r^{-1} \rceil m} \cdot \ell \geq m$ in $U_0$ and by \eqref{eq:UE1} in Condition~\ref{condição3}
\begin{equation}\label{u0_pn}
\PP \left[ U_0 \right] \geq h^{\lceil r^{-1} \rceil m}\,.
\end{equation}

Consider the following time shift of $X$: $W_k = X_{\lceil r^{-1} \rceil m + k}$, $k \geq 0$. Then $W$ is a $\tilde{\lambda}_{n}$-\Name{} (with $\tilde{\lambda}_n=\lambda_{n+\lceil r^{-1} \rceil m}$) adapted to the filtration $\{\FF_{k + \lceil r^{-1} \rceil m}\}_{k \ge 0}$ with excitation set 
$$
A' = \mathbb{Z}^d \setminus \{ X_0, \dots, X_ {\lceil r^{-1} \rceil m-1} \}\,,
$$ 
starting at $W_0 = y_0 := X_{\lceil r^{-1} \rceil m}$. Moreover for every $k$ we have 
\[
\EE[W_{k+1} - W_k | \FF_{k +\lceil r^{-1} \rceil m }] \cdot \ell  \ge \lambda(n_0 + \lceil r^{-1} \rceil m + k)^{-\beta}\,,
\] 
almost surely  on 
$\{ W_j \neq W_k \text{ for all } \; j < k \} \cap \{ W_k \in A'\}$.
%
%
Set $\delta = (2-\theta)(1/2 + \theta)$, where $\theta=\alpha-\beta$ and
$$m = C \Big(\frac{3}{\lambda }\Big)^{\frac{1}{\delta -1}}\,,$$
with $C$ as in the
statement of  Proposition~\ref{prop43_pnn0-1}.
Since $0<\beta <\alpha<1/6$, we have that $\delta > 1$.

The left-hand side of \eqref{cond44_pnn0} with the set $A$ taken as $A' - y_0$ is bounded above by $\lceil r^{-1} \rceil m$. Note that, for all $n \geq  m^{2- (\alpha-\beta)}$,
\begin{align*}
\frac{1}{3}n^{\frac{1}{2} + \alpha} \geq \frac{1}{3}n^{\frac{1}{2} + \alpha-\beta}  \geq \frac{1}{3} m^{\delta}  \geq \lceil r^{-1}\rceil m\,,
\end{align*}
where the last inequality follows from 
\begin{align}\label{nm_pn2}
\frac{m^{\delta-1}}{3\lceil r^{-1} \rceil} = \frac{3C^{\delta-1}}{3\lceil r^{-1} \rceil \lambda} > \frac{K\lceil r^{-1} \rceil}{\lceil r^{-1} \rceil \lambda}  = \frac{K}{\lambda} > 1 \, ,
\end{align}
since $K \ge 1 > \lambda$. The second inequality above follows from the definition of $C$, since $C > (K \lceil r^{-1}\rceil)^{\frac{1}{1-\delta}}$ and  $K\geq 1$. Thus~\eqref{cond44_pnn0} with excitation-allowing set $A' - y_0$ is satisfied for all $n\geq m^{2- \alpha+\beta}$.

Denote $m_0 = 0$, $m_1 = m$ and, for  $k \geq 1$,  $m_{k+1} = \frac{1}{3} \lambda m_{k}^{\delta}$.   The sequence $\{m_k\}_{k \geq 1}$ is increasing.  This can be proved by induction since  
\[ \frac{m_2}{m_1} = \frac{\lambda}{3} m^{\delta - 1} = C^{\delta-1}  > 1\,,\]
for all $\theta \in (0, \alpha)$ (which implies $\delta>1$), and assuming $m_k/m_{k-1} > 1$ we have
\[ \frac{m_{k+1}}{m_k} = \frac{ \frac{\lambda}{3} m_k ^{\delta}}{\frac{\lambda}{3} m_{k-1} ^{\delta}} = \left( \frac{m_k}{m_{k-1}} \right)^{\delta} > 1\,.  \]

For every $k \geq 1$ consider the following events:
$$
G_k = \Big\{ \min_{\lfloor m_{k-1} ^{2 - \theta}\rfloor  < j \leq m_k ^{2 - \theta}} \big( W_j - W_{\lfloor m_{k-1}^{2- \theta}\rfloor} \big) \cdot \ell > - m_k \Big\} 
$$
and
$$
U_k = \big\{ W_{\lfloor m_{k} ^{2 - \theta}\rfloor } \cdot \ell \geq m_{k+1} \big\}.    
$$
One can see that 
\begin{equation}\label{inclusão_pnn0}
\big\{ X_n \cdot \ell > 0, \text{ for all } n \geq 1 \big\} \supset \Big( \bigcap_{k=1}^{\infty} \left(G_k \cap U_k \right) \Big) \cap U_0\, .    
\end{equation}
The process $\{X_n \cdot \ell\}_{n\geq 0}$, is a $\FF$-submartingale, so $(W - y_0) \cdot \ell$ is also $\FF_{\cdot +\lceil r^{-1} \rceil m }$-submartingale. Write
\begin{equation*}
G_k^c = \bigcup_{j= \lfloor m_{k-1}^{2-\theta} \rfloor + 1}^{m_k^{2-\theta}} \left\{ \left( W_j - W_{\lfloor m_{k-1}^{2-\theta} \rfloor}\right) \cdot \ell \leq -m_k \right\}\,.    
\end{equation*}
and by Azuma's inequality (for supermartingales with bounded increments) 
\begin{align*}
\PP&\left[ \left( W_j - W_{\lfloor m_{k-1}^{2-\theta} \rfloor}\right) \cdot \ell \leq -m_k \right]  = \PP\left[ \left( W_{\lfloor m_{k-1}^{2-\theta} \rfloor} -W_j\right) \cdot \ell \geq m_k \right] 
\\
& \leq \exp \Big( - \frac{m_k^2}{2K^2(j - \lfloor m_{k-1}^{2-\theta} \rfloor) } \Big)
 \leq \exp \Big( - \frac{m_k^2}{2K^2 m_{k}^{2-\theta}} \Big)
 \leq \exp \Big( - \frac{m_k^{\theta}}{2K^2} \Big)\,.
\end{align*}
Thus, we have
\begin{equation*}
\PP[G_k|U_0] \geq 1 -\left( m_k ^{2- \theta} -\lfloor m_{k-1}^{2- \theta}\rfloor\right) e^{- \frac{m_k ^{\theta}}{2K^2}} \geq 1 - m_k ^{2- \theta} e^{- \frac{m_k ^{\theta}}{2K^2}}\,.
\end{equation*}


We point out that $m^{2-\theta}\geq n_0 + \lceil r^{-1} \rceil m$, indeed 
\begin{align*}
m^{1-\theta} - \lceil r^{-1} \rceil \geq \left( \frac{3K\lceil r^{-1} \rceil}{\lambda} \right)^{\frac{1-\theta}{\delta-1}} -\lceil r^{-1} \rceil > 1 \,, 
\end{align*}
since for all $\theta \in (0, \alpha)$ we have $(1-\theta)/(\delta - 1)> 1$. Thus
\begin{align*}
m^{2-\theta} - \lceil r^{-1} \rceil m & =  m \underbrace{( m^{1-\theta} - \lceil r^{-1} \rceil )}_{ > 1} \geq m \geq n_0
 \,.
\end{align*}
Since $\{m_k\}_{k \geq 1}$ is increasing, then $m_k^{2-\theta}\geq n_0 + \lceil r^{-1} \rceil m$ for all $k \geq 1$. Now the process $W-y_0$ satisfies Conditions~\ref{condição1},~\ref{condição2},~\ref{condição3}, the set $A'-y_0$ fulfills~\eqref{cond44_pnn0} for all $n \geq m^{2-\theta}$ and $m_k^{2-\theta}\geq n_0 + \lceil r^{-1} \rceil  m$ for all $k \ge 1$.  By Proposition~\ref{prop42_pnn0}, it holds that
\begin{equation*}
\PP[U_k|U_0] = \PP\left[W_{\lfloor m_k ^{2-\theta} \rfloor} \cdot \ell \geq \frac{\lambda }{3} m_{k}^{(2-\theta)(\frac{1}{2}+\theta)}
\right] \geq 1 - 5m_k^{2-\theta} e^{- \vartheta_1 m_k ^{(2- \theta)\vartheta_2}}\,.  
\end{equation*}

Now, write
\[ 
\PP \left[ \left( \bigcap_{k=1}^{\infty} \left(G_k \cap U_k \right) \right) \cap U_0 \right] = \PP[U_0] \Big( 1 - \sum_{k=1}^{\infty} ( \PP[G_k ^c| U_0] + \PP[U_k ^c| U_0] ) \Big)\,,
\] 
which is bounded from below by
\begin{equation}\label{psi_pnn0}
\begin{split}
 &\PP[U_0] \!\left( \!1 -\! \sum_{k=1}^{\infty}\!\left( m_k ^{2- \theta} e^{- \frac{m_k ^{\theta}}{2K^2}} + 5m_k ^{2- \theta}  e^{- \vartheta_1 m_k ^{(2- \theta)\vartheta_2}} \right)\!\right)
\!\geq\! h^{\lceil r^{-1} \rceil m} \!\left(\! 1 \!- 6\sum_{k=1}^{\infty} m_k ^{2- \theta} e^{- \vartheta_1 m_k ^{\varphi_1}}\! \right)\,.
\end{split}
\end{equation}
Note that $m$ is large enough so that $\{m_{k}^{2-\theta}e^{- \vartheta_1 m_k^{\varphi_1}}\}_{k \geq 1}$ is decreasing. Indeed,  $m$ is bigger than the inflection point $\left( \frac{ 2-\theta}{\vartheta_1 \varphi_1}\right)^{\frac{1}{\varphi_1}}$ of the function $z(x)=x^{2-\theta}e^{-\vartheta_1 x^{\varphi_1}}$, $x>0$:
\begin{align*}
m = C\left( \frac{3}{\lambda }\right)^{\frac{1}{\delta-1}} & > K^{\frac{1}{\delta-1}} \eta \left( \frac{3}{\lambda}\right)^{\frac{1}{\delta-1}} 
 = \eta \left(\frac{3K}{\lambda} \right)^{\frac{1}{\delta-1}} \geq \left( \frac{ 2-\theta}{\vartheta_1 \varphi_1}\right)^{\frac{1}{\varphi_1}}\,. 
\end{align*}
Thus, since we have that $m_{k+1}-m_k \geq  m \left( C^{\delta-1}-1 \right)>1$, it holds that 
\begin{align*}
\sum_{k=1}^{\infty}& m_{k}^{2-\theta}e^{-\vartheta_1 m_k^{\varphi_1}}  \leq 
m^{2-\theta}e^{-\vartheta_1 m^{\varphi_1}}
 + 
\int_{m}^{\infty} x^{2-\theta} e^{-\vartheta_1 x^{\varphi_1}} dx \, .
\end{align*}
By a change of variables, we write,
\begin{align*}
&\int_{m}^{\infty} x^{2-\theta_1} e^{-\vartheta_1 x^{\varphi_1}} dx  =  \varphi_1^{-1} \vartheta_1^{\frac{\theta-3}{\varphi_1}}\Gamma \left( \frac{3-\theta}{\varphi_1}, \vartheta_1 m^{\varphi_1} \right)\, ,
\end{align*}
where $\Gamma$ is the incomplete gamma function\footnote{$\Gamma(s, x) = \int_{x}^{\infty} t^{s-1}e^{-t} dt$.}. 
As mentioned above $m$ is large enough so that the sequence $\{m_{k}^{2-\theta}e^{- \vartheta_1 m_k^{\varphi_1}}\}_{k \geq 1}$ is decreasing. 
If needed, 
we may increase $m$ even further by increasing $C$, in order to  obtain
\begin{align}\label{eq: 1/7_pnn0}
\sum_{k=1}^{\infty} m_{k}^{2-\theta}e^{-\vartheta_1 m_k^{\varphi_1}} & \leq  
m^{2-\theta}e^{-\vartheta_1 m^{\varphi_1}} +   \varphi_1^{-1} \vartheta_1^{\frac{\theta-3}{\varphi_1}}\Gamma \left( \frac{3-\theta}{\varphi_1}, \vartheta_1 m^{\varphi_1} \right) \le \frac{1}{7}\, . 
\end{align}

Using~\eqref{eq: 1/7_pnn0} in~\eqref{psi_pnn0}, we obtain that,
\begin{align*}
\PP  \left[ \left( \bigcap_{k=1}^{\infty} \left(G_k \cap U_k \right) \right) \cap U_0 \right]
 &\geq h^{\lceil r^{-1} \rceil m} \left( 1 - 6\sum_{k=1}^{\infty} m_k ^{2- \theta} e^{- \vartheta_1 m_k ^{\varphi_1}} \right)
\geq  h^{\lceil r^{-1} \rceil C\left(\frac{3}{\lambda } \right)^{\frac{1}{\delta -1}}} c = \psi \, ,
\end{align*}
where $c \in (0,1)$. Theorem~\ref{thm:main} then follows from $\eqref{inclusão_pnn0}$.
\end{proof}

\medskip
\paragraph{\bf Acknowledgements:} We would like to thank Augusto Quadros Teixeira, Christophe Gallesco, Guilherme Ost, Luiz Renato Fontes and Maria Eulalia Vares for useful comments and suggestions.

\medskip

\paragraph{\bf Conflict of interest:} The authors have no competing interests to declare that are relevant to the content of  this article.

\medskip

\paragraph{\bf Data Availability:} Data sharing is not applicable to this article as no datasets were generated or analyzed during
the current study.


\appendix
\section{Proof of Proposition~\ref{prop41}}\label{ap:A}

This section is devoted to the proof of  Proposition~\ref{prop41}. Let $X$ be a $\lambda_n$-\Name{}. We denote by $L_n (m)$ its local time up to time $n$ on the $m$-th strip in direction $\ell$ which is defined as 
\begin{align*}
 L_n (m)  := \sum_{j=0}^n 1_{ \{ X_j \cdot \,\ell \; \in \,[m, m+1) \}}\;, \; m \in \ZZ \, .
\end{align*}
Let $T_{U}^X$ denote the first time the process $X$ visits the set $U$, i.e.,  
\begin{equation*}
    T_U^X := \min \{ n \ge 0 : X_n \in U \}\,.
\end{equation*}

The next result provides a control on the tail of the local times $L_n (m)$. 

\begin{lemma}\label{lema51}
If $X$ satisfies Conditions~\ref{condição1}, ~\ref{condição2} and~\ref{condição3}, then for any $\delta > 0$ there exists $\gamma_1' = \gamma_1'(K,h,r)$ such that 
\begin{equation*}
\PP \left[ L_n (m) \geq n^{\frac{1}{2} + 2\delta} \right] \leq \exp\{- \gamma_1' n^{\delta}\}\, , \ \forall \, m \in \ZZ\,.  
\end{equation*}
\end{lemma}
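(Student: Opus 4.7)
\medskip

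\noindent\textbf{Proof plan for Lemma \ref{lema51}.}

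My plan is to bound the local time $L_n(m)$ of the submartingale $Z_n := X_n \cdot \ell$ in the width-one strip $[m, m+1)$, combining the uniform ellipticity UE1 with concentration for the submartingale. This follows the strategy of \cite[Proposition 4.1]{menshikov2012general}.

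First I would set $k := \lceil 1/r \rceil$. Iterated application of UE1 via the tower property shows that at any time $j$, the conditional probability given $\mathcal{F}_j$ of $k$ consecutive excited jumps (each with $(X_{j+i+1}-X_{j+i})\cdot \ell > r$, $i=0,\ldots,k-1$) is at least $h^k$. When this block occurs, $Z_{j+k} - Z_j \geq kr \geq 1$, forcing the process out of the strip upward within $k$ steps. Consequently, the duration of any single continuous sojourn in the strip has stochastically geometric tails: starting from any visit, $\PP[\text{sojourn length} > tk \mid \mathcal{F}_j] \leq (1 - h^k)^t$, giving exponential decay at rate comparable to $h^k/k$.

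Second, I would bound the number of distinct visits to the strip by time $n$. For this I would use the Doob decomposition $Z = M + V$, where $M$ is a martingale with increments bounded by $2K$ and $V$ is predictable non-decreasing (by Condition~\ref{condição2}). By Azuma--Hoeffding, $\max_{j \leq n}|M_j| \leq C K \sqrt{n \log(1/\epsilon)}$ with probability at least $1 - \epsilon$. Since $V$ only pushes $Z$ upward, any return to the strip after an exit must be paid for by compensating oscillation of $M$, and so an iterative stopping-time construction (tracking successive upward exits driven by UE1 and subsequent downward returns) uses the Azuma bound as an ``oscillation budget'' to limit the number of distinct visits to $O(\sqrt{n \log(1/\epsilon)})$ on the same good event.

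Combining: at most $O(\sqrt{n \log(1/\epsilon)})$ separate visits, each of length at most $O(\log(1/\epsilon))$ except on a union-bound event of probability $O(\epsilon)$, gives $L_n(m) \lesssim \sqrt{n}\,(\log(1/\epsilon))^{O(1)}$. Choosing $\epsilon = \exp(-\gamma_1' n^\delta)$ with $\gamma_1' = \gamma_1'(K,h,r)$ sufficiently small yields $L_n(m) \leq n^{1/2 + 2\delta}$ off an event of probability at most $\exp(-\gamma_1' n^\delta)$.

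The main obstacle is the second stage: rigorously converting the Azuma fluctuation bound on $M$ into an explicit count of visits requires a careful stopping-time decomposition that attributes each entry to the strip to a specific downward oscillation of the martingale part, while also accommodating entries from below (driven by upward fluctuations) in a symmetric way. This accounting is the technical crux of the adaptation from \cite{menshikov2012general}; stages one and three are then relatively straightforward.
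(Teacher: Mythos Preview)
The paper does not actually prove this lemma; it simply notes that the proof is identical to that of \cite[Lemma~5.1]{menshikov2012general} because the only properties used there are uniform ellipticity (Condition~\ref{condição3}) and the fact that $\{X_n\cdot\ell\}$ is a submartingale, both of which hold for the $\lambda_n$-\Name. So there is no detailed argument to compare against, and your use of exactly these two ingredients is aligned with what the paper asserts is sufficient.

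That said, your Step~2 as written has a genuine gap. The Azuma bound gives you control of $\max_{j\le n}|M_j|$, i.e.\ the \emph{maximal deviation} of the martingale part, of order $\sqrt{n\log(1/\epsilon)}$. But the number of returns to a fixed strip is governed by the \emph{number of level crossings} of $M$ (shifted by the compensator), not by its maximum. A bounded-increment martingale with $\max_j|M_j|=O(\sqrt{n})$ can still cross any fixed level $\Theta(\sqrt{n})$ times---simple random walk is the prototype---so the ``oscillation budget'' you describe does not convert the Azuma estimate into a visit count in the way you suggest. The claim that ``each return is paid for by a downward oscillation of $M$'' is true, but summing those payments gives the total downward variation of $M$, which is $O(n)$, not $O(\sqrt{n})$.

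A route that does work, and is closer in spirit to what is done in \cite{menshikov2012general}, is to combine your Step~1 with a gambler's-ruin estimate rather than a crossing count: from each upward exit to level $\ge m+1$ (forced by UE1 with probability $\ge h^k$), optional stopping for the submartingale $Z=X\cdot\ell$ gives probability at least $c\,n^{-(1/2+\delta)}$ of reaching level $m+n^{1/2+\delta}$ before returning to the strip. Once that level is reached, a single application of Azuma to the supermartingale $-Z$ shows the process does not return to $[m,m+1)$ before time $n$ except with probability $n\exp(-c' n^{2\delta})$. The number of visits before escape is then stochastically dominated by a geometric variable with parameter $c\,h^k n^{-(1/2+\delta)}$, yielding the tail $\exp(-\gamma_1' n^{\delta})$ for $\{L_n(m)\ge n^{1/2+2\delta}\}$ after combining with your sojourn-length bound. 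This replaces the problematic ``budget'' accounting with a single escape event.
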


The proof of Lemma~\ref{lema51} is the same as that of~\cite[Lemma 5.1]{menshikov2012general} which relies on the uniform elliptic condition and the fact that $\{X_n \cdot \ell\}_{n \ge 0}$ is a $\FF$-submartingale and both properties are valid in our case.  
Using Lemma~\ref{lema51} and~\cite[Lemma 5.4]{menshikov2012general} we now prove Proposition~\ref{prop41}.

\begin{proof}[Proof of Proposition \ref{prop41}]
Consider $b \in (0,1)$ and $\varepsilon > 0$. Recall that when the process is in an already visited site it behaves like a $d$-martingale with bounded jumps and uniform elliptic condition; we denote this process by $\{Y_n\}_{n \ge 0}$. Then by~\cite[Lemma 5.2]{menshikov2012general} there exists a $b' \in (0, 1)$ and a positive constant $c$ (depending of $K$, $h$ and $r$) such that
\[ \EE[||Y_{n+1}||^{b'} | \FF_n] \ge ||Y_n||^{b'} 1_{\{||Y_n|| \ge c\}}\,.\]

Set $b = b'$ and $0< e_w < 1/6$ and define
\begin{equation*}
H^n_j := H\big( 2(j - 1)n^{e_w}, 2(j + 1)n^{e_w} \big)\, , \ n\ge 1 \,, \ j\ge 1\,,
\end{equation*}
so that $H^n_j$ is strip of width $4n^{e_w}$ in direction $\ell$. The strip $H^n_j$ will be called a trap if $|\Rr_n ^X \cap H^n_j | \geq n^{e_t}$, where $e_t = 2e_w(1-(b/2)) - 2 \varepsilon$ is the trap exponent. 
%
Set
\begin{equation*}
G = \{ |\Rr_n ^X | \geq n^{\frac{1}{2} + e_w (1-b)-4\varepsilon} \}\, .   
\end{equation*}
We are going to prove that
\begin{equation}\label{probg}
\begin{split}
\PP[G] \geq 1 - \left( \left( 2Kn + 1 \right)e^{-\gamma_1' n^{\frac{\varepsilon}{2}}} + \frac{n^{1-2e_w + \varepsilon}}{2} e^{-\gamma_4' n^{\varepsilon}} \right)\,,
\end{split}
\end{equation} 
for every $\varepsilon >0$ sufficiently small. 
This would establishes Proposition $\ref{prop41}$ since, for $\alpha < e_w(1-b)-4\varepsilon$ we have that  $\{|\Rr_n ^X | < n^{\frac{1}{2} + \alpha} \} \subset G^c$. 
%
%
Towards proving \eqref{probg}, let us introduce the event
\begin{equation*}
G_1 = \big\{ L_n (k) \leq n^{\frac{1}{2} + \varepsilon} \text{ for all } k \in [-Kn, Kn] \big\}\,. 
\end{equation*}
By Lemma~\ref{lema51}, it holds that
\begin{equation}\label{probg1}
\PP[G_1] \geq 1 - (2Kn + 1)\exp\{-\gamma_1' n^{\frac{\varepsilon}{2}}\}\, .    
\end{equation}
Now, let us define $\sigma_0 = 0$ and inductively 
\begin{equation}\label{sigmak}
\sigma_{k+1} = \min \{ j \geq \sigma_k + \lfloor n^{2e_w - \varepsilon} \rfloor : | \Rr_j ^X \cap B(X_j, n^{e_w}) | \leq n^{e_t} \}\,,  
\end{equation} 
(formally, if such $j$ does not exist, we put $\sigma_{k+1} = \infty$). Consider the event 
\begin{align*}
G_2 = \Big\{ & \text{at least one new point is hit on each of the time intervals } 
\\
& [\sigma_{j-1}, \sigma_j), j = 1,..., \frac{1}{2}n^{1-2 e_w + \varepsilon} \Big\}\,,  
\end{align*}
where hitting a new point means visiting a not-yet-visited site. Note that on $G^c_2$, the process does not hit a new point in time interval $[\sigma_{j-1},\sigma_j)$ for some $j=1,..., \frac{1}{2}n^{1-2 e_w +\varepsilon}$. When this happens, the process $X$ evolves as a $d$-dimensional martingale during time interval $[\sigma_{j-1},\sigma_j)$ and for $Y_\cdot = X_{\sigma_{j-1}+\cdot}$ 
$$
T^Y_{\left(\mathcal{R}^X_{\sigma_{j-1}}\right)^c} \ge \sigma_j - \sigma_{j-1} \ge n^{2e_w-\varepsilon}\,.
$$



To control the probability of $G_2^c$, we will apply~\cite[Lemma 5.4]{menshikov2012general}, setting $\delta = \frac{\varepsilon}{2 e_w}$, $m = n^{2 e_w}$ and $U = \left(\mathcal{R}^X_{\sigma_{j-1}}\right)^c$.
By the definition of $\sigma_{j-1}$, we have that $|\Rr_{\sigma_{j-1}} ^X \cap B(X_{\sigma_{j-1}}, n^{e_w}) | \leq n^{e_t}$, which for our choice of parameters implies 
\begin{align*}
| B(X_{\sigma_{j-1}}, n^{e_w}) \setminus \left(\Rr_{\sigma_{j-1}} ^X \right)^c| &=| B(X_{\sigma_{j-1}}, (n^{2e_w})^{1/2}) \setminus \left(\Rr_{\sigma_{j-1}} ^X \right)^c|  \leq n^{e_t} 
\\
&
= n^{2e_w(1- b/2) -2\varepsilon}=\left(n^{2e_w}\right)^{1-b/2 - 2\delta}\,,  
\end{align*}
and thus we can use~\cite[Lemma 5.4]{menshikov2012general} to conclude that 
$$
\PP\left[T^Y_{\left(\mathcal{R}^X_{\sigma_{j-1}}\right)^c} \ge \sigma_j - \sigma_{j-1} \ge \left(n^{2 e_w} \right)^{1 - \frac{\varepsilon}{2e_w}}\right] \le e^{-\gamma_4' n^{\delta}}\,,
$$
where $\gamma_4^\prime$ is a positive constant. Thus
\begin{align}\label{probg2}
\PP[G_2]  
 \geq 1 - \frac{1}{2}n^{1-2 e_w+\varepsilon} e^{-\gamma_4' n^{\delta}}\,.
\end{align}

Next, assuming that $n$ is large enough so that $8n^{1-\varepsilon} < n/2$,  we will show that $(G_1 \cap G_2) \subset G$. Suppose that both $G_1$ and $G_2$ occur, but $|\Rr_n ^X | < n^{\frac{1}{2} + e_w (1-b)-4\varepsilon}\}$. Denote by $\hat{L}_j $ the number of visits to $H^n_j$ up to time $n$, i.e., 
\begin{equation*}
\hat{L}_j = \sum_{k = 2(j-1)n^{e_w}}^{2(j+1)n^{e_w} - 1} L_n (k)\,,    
\end{equation*}
On $\{|\Rr_n ^X | < n^{\frac{1}{2} + e_w (1-b)-4\varepsilon}\}$ we have
$$
n^{\frac{1}{2} + e_w (1-b)-4\varepsilon} > |\Rr_n ^X| \ge  n^{e_t} |\{j: H^n_j  \text{ is a trap}\}|\,,
$$
thus the number of traps is at most 
$$
2n^{\frac{1}{2} + e_w(1-b) - 4\varepsilon -e_t} = 2n^{\frac{1}{2} - e_w - 2\varepsilon}\,.
$$
On $G_1$, we have, 
\begin{equation*}
\sum_{j\in \ZZ} \hat{L}_j 1_{\{H^n_j  \text{ is a trap}\}} \leq 4n^{e_w} \times 2n^{\frac{1}{2} - e_w - 2\varepsilon} \times n^{\frac{1}{2} + \varepsilon} = 8n^{1-\varepsilon}\,.  
\end{equation*}
Now observe that, since for $j \leq n$ we have $\Rr_j ^X \subset \Rr_n ^X$, if $|\Rr_j ^X \cap B(X_j, n^{e_w})| > n^{e_t}$ then $X_j$ must be in a trap.
Since $n$ is such that $8n^{1-\varepsilon} < n/2$, we obtain that,  on the event
\begin{equation*}
\Big\{ \sum_{j \in \ZZ} \hat{L}_j 1_{\{H^n_j  \text{ is a trap}\}} \leq 8n^{1-\varepsilon} \Big\}\,, 
\end{equation*}
the total time (up to time $n$) spent in non-traps is at least $n-8n^{1-\varepsilon} > n/2$. From~\eqref{sigmak}, the latter implies that   $\sigma_{\frac{n^{1-2 e_w + \varepsilon}}{2}} < n$. Indeed,  up to time $\sigma_{\frac{n^{1-2 e_w + \varepsilon}}{2}}$ we can have at most $n/2$ instances $j$ such that $|\Rr_j ^X \cap B(X_j, n^{e_w})| \leq n^{e_t}$. Therefore, on the event $G_2$ we have that $|\Rr_n ^X| \geq \frac{1}{2}n^{1-2e_w + \varepsilon}$. Recall that we assumed that  $G_1$ and $G_2$ occur, but $|\Rr_n ^X | < n^{\frac{1}{2} + e_w (1-b)-4\varepsilon}$. Since for  $e_w <\frac{1}{6}$ (and $n$ sufficiently large) it holds that $\frac{1}{2}n^{1-2e_w + \varepsilon} > n^{\frac{1}{2} + e_w (1-b)-4\varepsilon}$, for every $b \in (0,1)$ and $\varepsilon>0$, we obtain a contradiction. Then, $(G_1 \cap G_2) \subset G$, and~\eqref{probg} follows from~\eqref{probg1} and          ~\eqref{probg2},
\begin{align*}
\PP[G] & \geq \PP[G_1 \cap G_2]
 \geq 1 - (\PP[G_1] + \PP[G_2])
\geq 1 - \left( \left( 2Kn + 1 \right)e^{-\gamma_1' n^{\frac{\varepsilon}{2}}} + \frac{n^{1-2e_w +\varepsilon }}{2} e^{-\gamma_4' n^{ \varepsilon}} \right)\,.
\end{align*}
To conclude  the proof of Proposition $\ref{prop41}$, just note that for every $\alpha<1/6$, we can find $e_w<1/6$ and $b \in (0,1)$ and $\varepsilon$ (sufficiently small), such that $\alpha< e_w(1-b)-4\varepsilon$. \end{proof}

\section{Proof of Proposition~\ref{prop42_pnn0}}\label{ap:B}

Let $G$ be the following event
\begin{equation*}
G:=\left\{ |\mathcal{R}^X _n | \geq n^{\frac{1}{2} + \alpha} \right\} \cap \left\{X_k \in H\left( -n^{\frac{1}{2} + \alpha}, \frac{2}{3} \lambda n^{\frac{1}{2} + \alpha} \right), \text{ for all } k \leq n \right\} \,.
\end{equation*}

In order to prove \eqref{eq42c_pnn0}, we write that probability as   
\begin{align}\label{prop42p1_pn}
& \PP\left[ \left\{ X_n \cdot \ell < \frac{1}{3} \lambda n^{\frac{1}{2}+ \alpha-\beta} \right\} \cap G \right] + 
\PP\left[ \left\{ X_n \cdot \ell < \frac{1}{3} \lambda n^{\frac{1}{2}+ \alpha - \beta} \right\} \cap G^c \right] \,,
\end{align}
and we control the two terms separately. Note that the process $\{X_n\cdot \ell\}_{n\geq 0}$ is a $\FF$-submartingale and thus $\{-X_n\cdot \ell\}_{n \geq 0}$ is a $\FF$-supermartingale. 
As regards the second term in \eqref{prop42p1_pn},  set
$$E = \left\{ X_n \cdot \ell < \frac{
1}{3} \lambda n^{\frac{1}{2}+ \alpha - \beta}\right\}, \  M = \left\{ |\mathcal{R}^X _n | < n^{\frac{1}{2} + \alpha} \right\},$$ 
$$J = \left\{ \min_{k \leq n} X_k \cdot \ell < -n^{\frac{1}{2} + \alpha} \right\} \textrm{ and } T = \left\{ \max_{k \leq n} X_k \cdot \ell >   \frac{2}{3} \lambda n^{\frac{1}{2} + \alpha} \right\}.$$ 
It follows that
\begin{equation*}
\begin{split}
\PP&\left[ E \cap G^c \right]  = \PP\left[ \left( E \cap M \right) \cup \left( E \cap J \right) \cup \left( E \cap T\right)   \right]
\\
& \leq \PP\left[E \cap M \right] + \PP\left[ E \cap J \right] + \underbrace{\PP \left[ \max_{k \leq n} X_k \cdot \ell > \frac{2}{3} \lambda n^{\frac{1}{2} + \alpha} , \text{ } X_n \cdot \ell < \frac{1}{3} \lambda n^{\frac{1}{2} + \alpha-\beta} \right]}_{(*)}
\\
&\leq  \PP\left[|\mathcal{R}^X _n | < n^{\frac{1}{2} + \alpha}\right] + \PP\left[ \min_{k \leq n} X_k \cdot \ell < - n^{\frac{1}{2} + \alpha} \right] + (*)\,.
\end{split}
\end{equation*}
From Proposition~\ref{prop41}  we have that $\PP\left[|\mathcal{R}^X _n | < n^{\frac{1}{2} + \alpha}\right]\leq e^{- \gamma_1 n^{\gamma_2}}$, while by Azuma's inequality (for supermartingales), we  have that
\begin{equation*} 
\PP \left[ \min_{k \leq n} X_k \cdot \ell < - n^{\frac{1}{2} + \alpha} \right] \leq n \exp\left(-C_2 n^{2 \alpha}\right) \,, 
\end{equation*}
for $C_2 = 1/2K^2$. Regarding the term $(*)$, corresponding to the probability that the random walk exits, before time $n$,  the strip $H$ from the top and at time $n$ is in the strip $H$, we show that, there exists $C_1 > 0$ such that 
 \begin{equation} \label{max42_pnn0}
 \PP \left[ \max_{k \leq n} X_k \cdot \ell > \frac{2}{3} \lambda n^{\frac{1}{2} + \alpha} , \ X_n \cdot \ell < \frac{1}{3} \lambda n^{\frac{1}{2} + \alpha -\beta}  \right] \leq n e^{-C_1 n^{2 \alpha}}\, .
 \end{equation}
Note that 
\begin{align}
\label{eq:factorLAMBDA}
\left\{\max_{k \leq n} X_k \cdot \ell > \frac{2}{3} \lambda n^{\frac{1}{2} + \alpha} ,  X_n \cdot \ell < \frac{1}{3} \lambda n^{\frac{1}{2} + \alpha - \beta}\right\} \!\!
\subset  \bigcup_{k=1}^n \!\left\{(X_n  - X_k) \cdot \ell < \!\left(\frac{1}{3n^{\beta}} - \frac{2}{3} \right) \! \lambda n^{\frac{1}{2} + \alpha}\right\}\, ,
\end{align}
and by Azuma's inequality for supermartingales with increments bounded by $K$ (see \cite[Lemma 1]{wormald1995differential}), for every $k = \{1, \ldots ,n-1\}$ it holds that 
\begin{align*}
\PP &\left[ X_n \cdot \ell - X_k \cdot \ell < \left(  \frac{1}{3n^{\beta}} - \frac{2}{3} \right) \lambda n^{\frac{1}{2} + \alpha}  \right]  \leq \PP \left[ X_n \cdot \ell - X_k \cdot \ell <  -\frac{1}{3} \lambda n^{\frac{1}{2} + \alpha}  \right] 
\\
&\qquad \qquad \leq \exp \left(- \frac{\left(\frac{1}{3}\right)^{2} \lambda^2 n^{1+2\alpha}}{2(n-k)K^2} \right) \nonumber
 \leq \exp \left(- \frac{\lambda^2 n^{2\alpha}}{18 K^2} \right)\, . 
\end{align*}
Note that, since the value defining the top of the strip $H$ (i.e., $\frac{2}{3} \lambda n^{\frac{1}{2} + \alpha} $) depends linearly in $\lambda$, it was possible to apply Azuma's inequality for supermartingales for all values of $\lambda>0$. In fact,  it was possible to factorize $\lambda$ in \eqref{eq:factorLAMBDA}, thus obtaining  that $\left(  \frac{1}{3n^{\beta}} - \frac{2}{3} \right)<0$ for all $n$. 
Then \eqref{max42_pnn0} follows from the usual union bound with $C_1= \left(\frac{1}{3} \lambda \right)^2/2K^2$.  
For the second term in \eqref{prop42p1_pn}, we thus obtain 
\begin{equation} \label{parc1_pnn0}
\begin{split}
&\PP\left[ E \cap G^c \right]  
\leq e^{- \gamma_1 n^{\gamma_2}} + 
n e^{-C_2 n^{2 \alpha}} + 
n e^{-C_1 n^{2 \alpha}}\;. 
\end{split}
\end{equation}

We now address the first term in \eqref{prop42p1_pn}. 
Using~\eqref{cond44_pnn0}
 on $G$ we have at least $|\mathcal{R}^X _n | - \frac{1}{3}n^{\frac{1}{2} + \alpha} \geq \frac{2}{3}n^{\frac{1}{2} + \alpha}$ sites visited on the excitation-allowing $A$. Therefore,
if we define  $D_k := \mathbb{E} [X_{k+1} - X_k | \mathcal{F}_k] $, by Condition~\ref{condição2},  on the event $G$  we have that, almost surely 
\begin{equation}\label{eq:m_W}
\begin{split}
\Big( \sum_{k=0}^{n-1} D_k \Big) \cdot \ell & \geq \frac{2}{3}n^{\frac{1}{2}+ \alpha} \frac{\lambda}{(n_0 + n)^{\beta}}
\geq \frac{2\lambda}{3}  \frac{n^{\frac{1}{2}+ \alpha}}{(n + n)^{\beta}} > \frac{2^{1-\beta}\lambda}{3}n^{\frac{1}{2}+ \alpha - \beta}\,. 
\end{split}
\end{equation}  
Denoting 
$F=\left\{ X_n \cdot \ell < \frac{1}{3} \lambda n^{\frac{1}{2}+ \alpha-\beta}\right\}  \cap G  $ and  using ~\eqref{eq:m_W}, we obtain 
\begin{align*}
\PP\left[ F \right] & \leq  \PP\Big[ X_n \cdot \ell - \Big( \sum_{k=1}^{n-1} D_k \Big) \cdot \ell < \frac{1}{3} \lambda n^{\frac{1}{2}+ \alpha-\beta} - \frac{2^{1-\beta}\lambda}{3}n^{\frac{1}{2}+ \alpha - \beta} \Big]
\\
& \leq  \PP\Big[ X_n \cdot \ell - \Big( \sum_{k=1}^{n-1} D_k \Big) \cdot \ell <  \lambda n^{\frac{1}{2} + \alpha-\beta} \Big(  \frac{1}{3} - \frac{2^{1-\beta}}{3} \Big) \Big]\,.
\end{align*}
Setting $C'_4 := - (1/3 - 2^{1-\beta}/3) > 0$,
using that  $Y_n := X_n - \sum_{k=0}^{n-1} D_k$  is a martingale with bounded increments and  applying  Azuma's inequality  (see \cite[Theorem 2.19]{chung2006complex})   
\begin{equation}
\label{parcela4_pnn0}    
\begin{split}
\PP[F] & \leq  \PP\Big[ \Big( X_n  -  \sum_{k=1}^{n-1} D_k \Big) \cdot \ell <\! - C'_4  \lambda n^{\frac{1}{2} + \alpha-\beta} \Big] 
\!\leq  2 \!\exp \Big(\!\!- \!\frac{(C'_4)^{2} \lambda^2 n^{2 (\alpha-\beta)}}{2K^2}\Big) = 2 e^{-C'_5 n^{2(\alpha-\beta)}}\,, 
\end{split}
\end{equation}
where $C'_5  = (C'_4 \lambda)^2/2K^2$.
Then,  from \eqref{parc1_pnn0} and \eqref{parcela4_pnn0} we obtain  that 
\begin{equation*}
\label{eqfinal}
\begin{split}
\PP & \left[ X_n \cdot \ell < \frac{1}{3} \lambda n^{\frac{1}{2}+ \alpha-\beta} \right] \leq 
 e^{- \gamma_1 n^{\gamma_2}} + 
n e^{-C_2 n^{2 \alpha}} + 
ne^{-C_1 n^{2 \alpha}} +  2 e^{- C'_5   n^{2 (\alpha-\beta)}} 
\leq 5ne^{-\vartheta_1 n^{\vartheta_2}}\,,
\end{split}
\end{equation*}
where $\vartheta_2 = \min\left\{ \gamma_2, 2(\alpha-\beta) \right\}$ and
$$
\vartheta_1 = \min\left\{ \gamma_1, \frac{1}{2K^2}, \frac{\lambda^{2}}{18K^2}, \frac{((1/3 -2^{1-\beta}/3) \lambda)^2}{2K^2} \right\}\,.
$$
 \qed

\bibliographystyle{abbrv}
\bibliography{referencias}
 
 \end{document}